\newtheorem{lemma}{Lemma}
\newtheorem{theorem}{Theorem}
\title{Universal Approximation on the Hypersphere}
\author{Tin Lok James Ng \and Kwok-Kun Kwong }
\date{}
\begin{document}

\maketitle

\begin{abstract}
    
It is well known that any continuous probability density function on $\mathbb{R}^m$ can be approximated arbitrarily well by a finite mixture of normal distributions, provided that the number of mixture components is sufficiently large. The von-Mises-Fisher distribution, defined on the unit hypersphere $S^m$ in $\mathbb{R}^{m+1}$, has properties that are analogous to those of the multivariate normal on $\mathbb{R}^{m+1}$. We prove that any continuous probability density function on $S^m$ can be approximated to arbitrary degrees of accuracy by a finite mixture of von-Mises-Fisher distributions.  
\end{abstract}

\section{Introduction}
Finite mixtures of distributions \citep{mclachlan2000} are being widely used in various fields for modelling random phenomena. In a finite mixture model, the distribution of random observations is modelled as mixture of a finite number of component distributions with varying proportions. The finite mixture of normal distributions \citep{fraley2002} is one of the most frequently used finite mixture models for continuous data taking values in the Euclidean space, because of their flexibility of representation of arbitrary distributions. Indeed, it has been shown that given sufficient number of mixture components, a finite mixture of normals can approximate any continuous probability density functions up to any desired level of accuracy \citep{bacharoglou2010, nguyen2019}.
\\\\
Despite the success and popularity of finite mixture of normal distributions in a wide range of applications, frequently, data possess more structure and representing them using Euclidean vectors may be inappropriate. An important case is when data are normalized to have unit norm, which can be naturally represented as points on the unit hypersphere $S^m := \{x \in \mathbb{R}^{m+1}: ||x||_2 = 1 \}$. For example, the direction of flight of a bird or the orientation of an animal can be represented as points on the circle $S^1$ or sphere $S^2$. Consequently, standard methods for analyzing univariate or multivariate data cannot be used, and distributions that take into account the directional nature of the data are required.
\\\\
The von-Mises-Fisher distribution \citep{fisher1993} is one of the most commonly used distribution to describe directional data on $S^m$ which has properties analogous to those of the multivariate normal on $\mathbb{R}^{m+1}$. A unit norm vector $x \in S^m$ has a von-Mises-Fisher distribution if it has density
$$ f_{m+1}(x; \mu, \kappa) = c_{m+1}(\kappa) \exp(\kappa \langle x, \mathbf{\mu} \rangle ) , \quad x \in S^m,$$
where $\kappa > 0$ is the concentration parameter and the mean direction $\mu \in S^m$ satisfies $||\mu||=1$. In particular, as $\kappa$ increases, the distribution becomes increasingly concentrated at $\mu$. The normalizing constant $c_{m+1}(\kappa)$ is given by
$$ c_{m+1}(\kappa) = \frac{\kappa^{ \frac{m+1}{2} - 1 }}{ (2 \pi)^{ \frac{m+1}{2} } I_{\frac{m+1}{2}-1}(\kappa)} ,$$
where $I_v$ is the modified Bessel function at order $v$.
\\\\
A finite mixture of von-Mises-Fisher distributions on $S^m$ with $H$ components has density
\begin{eqnarray}
\label{vmf_mix}
f_{m+1}(x; \{ \pi_h, \mu_h, \kappa_h \}_{h=1}^{H}) = \sum_{h=1}^{H} \pi_h f_{m+1}(x;\mu_h,\kappa_h) .
\end{eqnarray}
The mixing proportions $\{\pi_h\}_{h=1}^{H}$ are non-negative and sum to 1 (i.e. $0 \le \pi_h \le 1, \sum_{h} \pi_h = 1$), and $\{\mu_h, \kappa_h\}_{h=1}^{H}$ are the parameters for the $H$ mixture components.
\\\\
Finite mixtures of von-Mises-Fisher distributions have found numerous applications, including clustering of high dimensional text data and gene expression \citep{banerjee2005} and clustering of online user behavior \citep{qin2016}. A natural question that arises is whether finite mixtures of von-Mises-Fisher distributions can approximate any continuous probability distribution on the hypersphere up to any desired level of accuracy. 
\\\\
In this paper, we provide an affirmative answer to this question. We prove that any continuous probability distribution on $S^m$ can be approximated by finite mixture of von-Mises-Fisher distributions in $\sup$ norm given enough mixture components, and each component is sufficiently concentrated at respective mean directions. Our proof utilizes the theory of approximation by spherical convolution \citep{menegatto1997}. 
\\\\
The paper is structured as follows. Section \ref{sec:background} provides relevant background that are needed for the proof of the main result. The main result is stated in Section \ref{sec:main} and is proved in Section \ref{sec:proof}.

\section{Background}
\label{sec:background}
This section provides the definitions of kernel function, spherical convolution and eigenfunction expansion which are needed for the proof of the main result. We refer the interested reader to \cite{menegatto1997} for detailed expositions of the theory.
\\\\
We denote the space of all continuous functions defined on the hypersphere $S^m$ by $C(S^m)$. Let $d\omega_m$ be the surface measure on $S^m$, and define $\omega_m := \int_{S^m} d \omega_m$. The uniform and the ${\cal L}^p$ norm on $S^m$ are defined as
$$ ||f||_{m, \infty} := \sup_{x \in S^m} |f(x)| $$
and
$$ ||f||_{m, p} := \bigg( \frac{1}{\omega_m} \int_{S^{m}} |f(x)|^{p} d \omega_m(x) \bigg)^{1/p} ,$$
respectively. In particular, the ${\cal L}^p$ space contains all functions defined on $S^m$ that are integrable with respect to $d \omega_m$. When no confusion arises, we let $V_m$ be any of the space above with corresponding norm $||\cdot||_{m}$ (i.e. $||\cdot||_{m} = ||\cdot||_{m,p}$ for $1 \le p < \infty$ or $||\cdot||_{m} = \sup_{x \in S^m} |f(x)| $). 
\\\\
We define the space ${\cal L}^{1,m}$ which consists of all measurable functions $K$ on $[-1,1]$ with norm
$$ ||K||_{1,m} := \frac{\omega_{m-1}}{\omega_m} \int_{-1}^{1} |K(t)| (1-t^2)^{(m-2)/2} dt < \infty .$$
Functions in the space ${\cal L}^{1,m}$ are called kernels. Let $\langle \cdot, \cdot \rangle$ be the inner product in $\mathbb{R}^{m+1}$, it is straight forward to show that for all $x \in S^m$, the following equality holds:
$$||K||_{1,m} := \frac{1}{\omega_m} \int_{S^{m}} |K(\langle x,y \rangle)| d \omega_m(y) .$$
The spherical convolution $K * f$ of a kernel $K$ in ${\cal L}^{1,m}$ with a function $f$ in $V_m$ is defined by
$$ (K * f)(x) := \frac{1}{\omega_m} \int_{S^m} K( \langle x,y \rangle )f(y) d\omega_m(y), \quad x \in S^{m}. $$
For a fixed kernel $K$, the mapping defined by the spherical convolution $f \rightarrow K * f$ for $f \in V_m$ has range in $V_m$.
\\\\
A useful property of spherical convolution is the Funk and Hecke's formula \citep{xu2000} for eigenfunction expansion of any kernel $K \in {\cal L}^{1,m}$. Let ${\cal H}_{k}^{m}$ be the space of all degree $k$ spherical harmonics in $m+1$ variables and let $N^{m}_{k}$ be its dimension \citep[Chapter~3]{reimer2012}. Let $Q_k^{(m-1)/2}$ be the Gegenbauer polynomial of degree $k$ normalized by $Q_k^{(m-1)/2}(1) = N_{k}^{m}$. The Gegenbauer polynomials are certain types of the Jacobi polynomials and are conveniently defined using generating functions \citep[Chapter~2]{reimer2012}. 
\\\\
The Funk and Hecke's formula states that for a kernel $K \in {\cal L}^{1,m}$ the following expansion holds:
$$ K * Y_k^{m} = a_k^{m}(K) Y_k^{m}, \quad K \in {\cal L}^{1,m}, Y_k^{m} \in {\cal H}_{k}^{m}, k=0,1,\ldots $$
In particular, the spherical harmonics $Y_k^{m}$ for $k=0,1,\ldots$ are the eigenfunctions associated with the kernel $K$, and the eigenvalues in the series expansion can be expressed in terms of Gegenbauer polynomials:
$$ a_k^m(K) = \frac{\omega_{m-1}}{\omega_m} \int_{-1}^{1} K(t) \frac{Q_k^{(m-1)/2}(t)}{Q_k^{(m-1)/2}(1)} (1-t^2)^{((m-2)/2} dt, \quad k=0,1,\ldots $$
In particular, we have
$$ a_0^{m}(K) = \frac{\omega_{m-1}}{\omega_m} \int_{-1}^{1} K(t) (1-t^2)^{((m-2)/2} dt . $$ 
\cite{menegatto1997} has investigated necessary and sufficient conditions under which a sequence of kernels $\{K_n\}_n$ in ${\cal L}^{1,m}$ has the property
$$ ||K_n * f - f||_{m} \rightarrow 0, \quad \forall f \in V_m $$
as $n \rightarrow \infty$. For non-negative kernels $\{K_n\}_n$, Theorem 3.4 of \cite{menegatto1997} provides sufficient conditions for the convergence of spherical convolutions $K_n * f \rightarrow f$, and is stated below.

\begin{lemma}
\label{conv1}
Let $\{K_n\}$ be a sequence of non-negative kernels in ${\cal L}^{1,m}$. Suppose 
\begin{enumerate}
\item $a_0^m(K_n) \rightarrow 1$ as $n \rightarrow \infty$;
\item $(\omega_{m-1} / \omega_m) \int_{-1}^{\rho} |K_n(t)| (1 - t^2)^{(m-2)/2} dt \rightarrow 0 $, for all $\rho \in (-1, 1),$
\end{enumerate}
then $||K_n * f - f||_{m} \rightarrow 0 $ as $n \rightarrow \infty$.
\end{lemma}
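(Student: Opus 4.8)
The plan is to recognise Lemma~\ref{conv1} as the sphere analogue of the classical fact that convolution against a ``good kernel'' (an approximate identity) recovers a function in the limit, and to run the usual three-step argument: first show the convolution operators $f\mapsto K_n*f$ are uniformly bounded on $V_m$; then prove $K_n*f\to f$ for $f$ continuous; then pass to arbitrary $f\in V_m$ by density. For the first step, note that since $K_n\ge 0$ one has $\|K_n\|_{1,m}=\frac{\omega_{m-1}}{\omega_m}\int_{-1}^{1}K_n(t)(1-t^2)^{(m-2)/2}\,dt=a_0^m(K_n)$, so by hypothesis~1 the quantities $\|K_n\|_{1,m}$ are bounded, say by $M$. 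A short Jensen/Fubini estimate, using that $\frac{1}{\omega_m}\int_{S^m}K_n(\langle x,y\rangle)\,d\omega_m(y)=a_0^m(K_n)$ is independent of $x$ by rotational symmetry, then gives $\|K_n*f\|_m\le\|K_n\|_{1,m}\,\|f\|_m\le M\|f\|_m$ for each of the norms in play.

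For the second step, take $f\in C(S^m)$ and fix $\epsilon>0$. Writing $a_0^m(K_n)=\frac{1}{\omega_m}\int_{S^m}K_n(\langle x,y\rangle)\,d\omega_m(y)$ (the absolute-value-free form of the identity for $\|K\|_{1,m}$ recorded in Section~\ref{sec:background}), one has
\[
(K_n*f)(x)-f(x)=\frac{1}{\omega_m}\int_{S^m}K_n(\langle x,y\rangle)\bigl(f(y)-f(x)\bigr)\,d\omega_m(y)+\bigl(a_0^m(K_n)-1\bigr)f(x).
\]
The second term is at most $|a_0^m(K_n)-1|\,\|f\|_{m,\infty}\to 0$ by hypothesis~1. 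For the first, use uniform continuity of $f$ on the compact $S^m$ (together with $|x-y|^2=2-2\langle x,y\rangle$) to pick $\rho\in(-1,1)$ close to $1$ with $|f(y)-f(x)|<\epsilon$ whenever $\langle x,y\rangle>\rho$, and split the integral at $\langle x,y\rangle=\rho$: on $\{\langle x,y\rangle>\rho\}$ the integrand is $\le\epsilon\,K_n(\langle x,y\rangle)$, contributing at most $\epsilon\|K_n\|_{1,m}\le M\epsilon$; on $\{\langle x,y\rangle\le\rho\}$ it is $\le 2\|f\|_{m,\infty}K_n(\langle x,y\rangle)$, and $\frac{1}{\omega_m}\int_{\{\langle x,y\rangle\le\rho\}}K_n(\langle x,y\rangle)\,d\omega_m(y)=\frac{\omega_{m-1}}{\omega_m}\int_{-1}^{\rho}K_n(t)(1-t^2)^{(m-2)/2}\,dt\to 0$ by hypothesis~2. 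Taking $\sup_x$, then $\limsup_n$, then $\epsilon\downarrow 0$ yields $\|K_n*f-f\|_{m,\infty}\to 0$, which also gives convergence in every ${\cal L}^p$ norm since $\|\cdot\|_{m,p}\le\|\cdot\|_{m,\infty}$.

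The third step is needed only when $V_m$ is an ${\cal L}^p$ space: given $f\in{\cal L}^p$ and $\epsilon>0$, choose $g\in C(S^m)$ with $\|f-g\|_{m,p}<\epsilon$ (density of continuous functions in ${\cal L}^p$), and bound $\|K_n*f-f\|_m\le\|K_n*(f-g)\|_m+\|K_n*g-g\|_m+\|g-f\|_m\le(M+1)\epsilon+\|K_n*g-g\|_m$; sending $n\to\infty$ and then $\epsilon\downarrow 0$ finishes the proof. The only genuinely delicate point is in the second step: one must know that both spherical integrals of $K_n$ appearing there — the total mass $a_0^m(K_n)$ and the mass on the complement of a spherical cap — are \emph{independent of the centre $x$}. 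This is precisely the Funk--Hecke-type change of variables that collapses $\int_{S^m}g(\langle x,y\rangle)\,d\omega_m(y)$ to the one-dimensional integral $\omega_{m-1}\int_{-1}^{1}g(t)(1-t^2)^{(m-2)/2}\,dt$, and it is what allows hypotheses~1 and~2, which only see the radial profile of $K_n$, to control the convolution uniformly over $S^m$.
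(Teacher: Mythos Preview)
Your argument is correct, but there is nothing to compare it against: the paper does not prove Lemma~\ref{conv1} at all. It is imported wholesale as Theorem~3.4 of \cite{menegatto1997} and used as a black box. What you have written is thus a self-contained proof that the paper simply does not supply.

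The route you take is the classical approximate-identity argument, and each step checks out. Since $K_n\ge 0$, one has $\|K_n\|_{1,m}=a_0^m(K_n)$, so hypothesis~1 gives the uniform operator bound; the Young/Schur estimate $\|K_n*f\|_m\le\|K_n\|_{1,m}\|f\|_m$ is justified by the rotational symmetry you emphasise at the end (the zonal change of variables making $\frac{1}{\omega_m}\int_{S^m}K_n(\langle x,y\rangle)\,d\omega_m$ independent of the base point, whether one integrates in $x$ or in $y$). The cap-splitting in step~2 uses hypotheses~1 and~2 exactly as intended, and the density argument in step~3 is standard. The only place one might ask for an extra line is the ${\cal L}^p$ operator bound, where ``a short Jensen/Fubini estimate'' could be spelled out as Minkowski's integral inequality or the Schur test; but this is routine and does not affect correctness.
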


\section{Main Result}
\label{sec:main}
We state the main result concerning the approximating properties of the finite mixtures of von Mises-Fisher distributions in the form (\ref{vmf_mix}). Recall that the probability density function of the von Mises–Fisher distribution for the random $(m+1)$-dimensional unit vector $x$ is given by:
$$ f_{m+1}(x; \mu, \kappa) = c_{m+1}(\kappa) \exp(\kappa \langle x, \mu \rangle ) ,$$
where $\mu \in S^m$ is the mean direction and $\kappa > 0$ is the concentration parameter.
We define a sequence of kernels $ \{K_n\}_{n}$ in ${\cal L}^{1,m}$ by
\begin{eqnarray}
\label{kernel_seq}
K_n(t) = c_{m+1}(n) \exp(n t), \quad t \in [-1, 1] .
\end{eqnarray}
In particular, for any fixed $y \in S^m$,  
$$ K_n(\langle x, y \rangle) = c_{m+1}(n) \exp(\kappa \langle x, y \rangle ), \quad x \in S^m $$
is the density function of the von Mises-Fisher distribution with mean direction $y$ and concentration parameter $n$. For a fixed $y \in S^m$, $K_n(\langle \cdot, y \rangle)$ plays the role of a ``bump function'' and becomes increasingly concentrated on $y$ as $n$ increases. 
\\\\
We show that for any continuous probability density functions $f$ on $S^m$, we can construct a mixture of von Mises-Fisher distributions where each mixture component has the form $K_n(\langle \cdot, y_k \rangle)$ and $f$ can be approximated up to desired level of accuracy under the uniform norm.

\begin{theorem}
\label{dense2}
Let $f$ be a continuous probability density function on $S^m$, then given $\delta>0$, there exists integers $n$ and $N$, $y_1,y_2, \ldots y_N$ in $S^m$, $c_1, \ldots, c_N$ in $\mathbb{R}$ with $c_k > 0$ and $\sum_{k=1}^{N} c_k = 1$ such that
$$ \max_{x \in S^m}\bigg| f(x) - \sum_{k=1}^{N} c_k K_n(\langle x, y_k \rangle) \bigg| < \delta .$$
\end{theorem}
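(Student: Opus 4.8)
The plan is to combine the ``approximate identity'' property of the spherical convolutions $K_n*f$ with a quadrature discretization of the convolution integral, followed by a renormalization of the quadrature weights. First I would apply Lemma~\ref{conv1} to the kernels $\omega_m K_n$, with $K_n$ as in \eqref{kernel_seq}: these are non-negative and lie in $\mathcal{L}^{1,m}$, and condition (1) here reads $a_0^m(\omega_m K_n)=1$, which is merely a restatement of the fact that $K_n(\langle\cdot,y\rangle)$ is a probability density on $S^m$, so it needs no asymptotics. Condition (2) is the substantive check: for fixed $\rho\in(-1,1)$ one has $\int_{-1}^{\rho}e^{nt}(1-t^2)^{(m-2)/2}\,dt\le C\,e^{n\rho}$ for a constant $C$ independent of $n$, whereas the large-argument asymptotics $I_\nu(n)\sim e^{n}/\sqrt{2\pi n}$ of the modified Bessel function give $c_{m+1}(n)=\Theta\big(n^{m/2}e^{-n}\big)$, so the product is $O\big(n^{m/2}e^{-n(1-\rho)}\big)\to 0$ since $1-\rho>0$. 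Lemma~\ref{conv1} then yields $\big\|\,\omega_m(K_n*f)-f\,\big\|_{m,\infty}\to 0$, and I would fix an integer $n$ with $\big\|\,\omega_m(K_n*f)-f\,\big\|_{m,\infty}<\delta/3$.

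With $n$ fixed, the next step is to discretize $\omega_m(K_n*f)(x)=\int_{S^m}K_n(\langle x,y\rangle)f(y)\,d\omega_m(y)$. The map $(x,y)\mapsto K_n(\langle x,y\rangle)f(y)$ is continuous, hence uniformly continuous, on the compact product $S^m\times S^m$; write $\psi_n(\cdot)$ for its modulus of continuity in the $y$-variable, which is uniform in $x$. Choosing a finite measurable partition $S^m=\bigcup_{k=1}^{N}A_k$ with $\mathrm{diam}(A_k)\le\eta$ and marked points $y_k\in A_k$, and replacing the integrand on each $A_k$ by its value at $y_k$, one obtains
$$\Big|\,\omega_m(K_n*f)(x)-\sum_{k=1}^{N}c_k\,K_n(\langle x,y_k\rangle)\,\Big|\;\le\;\omega_m\,\psi_n(\eta)\qquad\text{for every }x\in S^m,$$
with $c_k:=\omega_m(A_k)\,f(y_k)\ge 0$; take $\eta$ so small that $\omega_m\,\psi_n(\eta)<\delta/3$. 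Since $f$ is a probability density, $\sum_k c_k=\sum_k\omega_m(A_k)f(y_k)$ is a Riemann sum for $\int_{S^m}f\,d\omega_m=1$, so shrinking $\eta$ further also yields $\big|\sum_k c_k-1\big|<\epsilon$ for any preassigned $\epsilon>0$.

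It remains to turn the weights into legitimate mixing proportions. Discard the indices with $c_k=0$ (at least one survives, since $\sum_k c_k$ is near $1$) — this changes nothing — and rescale the surviving weights by $1/\sum_j c_j$, obtaining $c_k'>0$ with $\sum_k c_k'=1$. The function $\sum_k c_k K_n(\langle x,y_k\rangle)$ lies within $2\delta/3$ of $f$ and is therefore uniformly bounded by some constant $M$, so multiplying it by $1/\sum_j c_j=1+O(\epsilon)$ perturbs it by less than $\delta/3$ in uniform norm once $\epsilon$ is small. Summing the three bounds gives $\max_{x\in S^m}\big|f(x)-\sum_k c_k'\,K_n(\langle x,y_k\rangle)\big|<\delta$, as required.

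I expect the main obstacle to be condition (2) of Lemma~\ref{conv1} in the first step, i.e.\ pinning down the decay rate of $c_{m+1}(n)$ via the asymptotics of the modified Bessel function; the remainder is essentially bookkeeping. The only further point deserving care is that the quadrature estimate in the second step must be uniform in $x$, which works precisely because a \emph{single}, fixed kernel $K_n(\langle\cdot,\cdot\rangle)$ appears and is uniformly continuous on the compact set $S^m\times S^m$, together with the elementary fact that $S^m$ admits finite measurable partitions of arbitrarily small mesh.
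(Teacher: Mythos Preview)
Your argument is correct and follows the same three-stage architecture as the paper: verify the hypotheses of Lemma~\ref{conv1} for the vMF kernels, discretize the convolution integral by a Riemann sum that is uniform in $x$, and then force the weights to be genuine mixing proportions. The two places where you diverge are both in implementation. For condition~(2) of Lemma~\ref{conv1} you invoke the large-argument asymptotics $I_\nu(n)\sim e^{n}/\sqrt{2\pi n}$ to get $c_{m+1}(n)=\Theta(n^{m/2}e^{-n})$, whereas the paper avoids any Bessel analysis by writing the quantity as a ratio of spherical integrals and comparing numerator and denominator via an elementary cap estimate; your route is shorter if one is willing to import the asymptotic, the paper's is self-contained. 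For the normalization $\sum_k c_k=1$, you take $c_k=\omega_m(A_k)f(y_k)$ at arbitrary marked points, show $\sum_k c_k\to 1$ as the mesh shrinks, and then rescale, absorbing the $O(\epsilon)$ perturbation into the final $\delta/3$; the paper instead insists that the partition pieces $U_k$ be connected and then applies the integral mean value theorem on each $U_k$ to \emph{choose} $y_k$ so that $f(y_k)\,\omega_m(U_k)=\int_{U_k}f\,d\omega_m$ exactly, whence $\sum_k c_k=\int_{S^m}f=1$ on the nose with no rescaling step. Your rescaling is slightly more robust (no connectedness needed), while the paper's MVT trick is cleaner and saves one $\epsilon$.
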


\section{Proof of Theorem 1}
\label{sec:proof}
In this section we first state and prove a few lemmas needed for the proof of Theorem \ref{dense2}. Recall that $V_m$ is the space of integrable functions on $S^m$ with respect to either the ${\cal L}^{p}$ norm or the uniform norm $||\cdot||_{m}$. We first show that for any function $f \in V_m$ the spherical convolution $K_n * f$ converges to $f$ in $||\cdot||_m$ norm. 
\begin{lemma}
\label{conv2}
$||K_n * f - f||_{m} \rightarrow 0 $ as $n \rightarrow \infty$ for all $f \in V_m$.
\end{lemma}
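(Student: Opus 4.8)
The plan is to verify that the sequence of kernels $\{K_n\}$ defined in (\ref{kernel_seq}) satisfies the two hypotheses of Lemma \ref{conv1}, from which the conclusion follows immediately. First I would check non-negativity: since $c_{m+1}(n) > 0$ and $\exp(nt) > 0$ for all $t \in [-1,1]$, each $K_n$ is a non-negative kernel, and the fact that it lies in ${\cal L}^{1,m}$ follows because $K_n(\langle x, \cdot\rangle)$ is a von Mises--Fisher density and hence integrates to $1$ over $S^m$, which already shows $\|K_n\|_{1,m} = 1 < \infty$.

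The first hypothesis, $a_0^m(K_n) \to 1$, is in fact immediate from this same observation: since $K_n$ is non-negative, $a_0^m(K_n) = \frac{\omega_{m-1}}{\omega_m}\int_{-1}^1 K_n(t)(1-t^2)^{(m-2)/2}\,dt = \|K_n\|_{1,m} = 1$ for every $n$, using the stated identity relating $\|\cdot\|_{1,m}$ to the integral over $S^m$ of $|K_n(\langle x,y\rangle)|$ and the fact that this equals the total mass of the von Mises--Fisher density. So hypothesis 1 holds with the constant sequence $1$, trivially converging to $1$.

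The main work is hypothesis 2: showing $\frac{\omega_{m-1}}{\omega_m}\int_{-1}^{\rho} K_n(t)(1-t^2)^{(m-2)/2}\,dt \to 0$ for each fixed $\rho \in (-1,1)$. The idea is that the mass of $K_n$ concentrates near $t = 1$ as $n \to \infty$, so the tail integral over $[-1,\rho]$ vanishes. Concretely, I would bound $\int_{-1}^{\rho} c_{m+1}(n)\exp(nt)(1-t^2)^{(m-2)/2}\,dt \le c_{m+1}(n)\exp(n\rho)\int_{-1}^{\rho}(1-t^2)^{(m-2)/2}\,dt \le C\, c_{m+1}(n)\,e^{n\rho}$ for a constant $C$ depending only on $m$. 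It then remains to show $c_{m+1}(n)e^{n\rho} \to 0$. Using the explicit formula $c_{m+1}(n) = \frac{n^{(m+1)/2 - 1}}{(2\pi)^{(m+1)/2} I_{(m+1)/2 - 1}(n)}$ together with the classical asymptotic $I_\nu(n) \sim \frac{e^n}{\sqrt{2\pi n}}$ as $n \to \infty$, one finds that $c_{m+1}(n)$ decays like $n^{(m-1)/2} e^{-n}$ up to constants, so $c_{m+1}(n)e^{n\rho}$ behaves like $n^{(m-1)/2} e^{-n(1-\rho)} \to 0$ since $1 - \rho > 0$.

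The main obstacle is thus the asymptotic estimate of the normalizing constant $c_{m+1}(n)$, which requires invoking the large-argument asymptotics of the modified Bessel function $I_\nu$; alternatively, one can avoid sharp asymptotics by observing that $c_{m+1}(n)^{-1} = \int_{S^m}\exp(n\langle x,y\rangle)\,d\omega_m(y)$ (up to the normalization in $\|\cdot\|_{1,m}$) and lower-bounding this integral by restricting to a small cap around $y = x$ where $\langle x,y\rangle \ge 1 - \epsilon$, giving $c_{m+1}(n)^{-1} \gtrsim e^{n(1-\epsilon)} \cdot (\text{cap volume})$, hence $c_{m+1}(n)e^{n\rho} \lesssim e^{-n(1-\epsilon-\rho)}$, which tends to $0$ once $\epsilon$ is chosen small enough that $1 - \epsilon - \rho > 0$. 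Either route works; I would likely present the self-contained cap-volume argument to keep the proof elementary. Once both hypotheses are established, Lemma \ref{conv1} gives $\|K_n * f - f\|_m \to 0$ for all $f \in V_m$, completing the proof.
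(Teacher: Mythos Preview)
Your proposal is correct and follows essentially the same strategy as the paper: verify $a_0^m(K_n)=1$ from the fact that $K_n(\langle x,\cdot\rangle)$ is a probability density, and for condition~2 bound the tail mass on $[-1,\rho]$ by $e^{n\rho}$ times a constant while lower-bounding the full normalizing integral via a spherical cap $\{y:\langle x,y\rangle\ge 1-\epsilon\}$ with $1-\epsilon>\rho$. The paper's proof is exactly your cap-volume argument; the Bessel-asymptotic alternative you mention would also work but is not the route the paper takes.
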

\begin{proof}
It is sufficient to verify conditions 1 and 2 in Lemma \ref{conv1}. For condition 1, since for non-negative kernel $K$ and for any fixed $x \in S^m$,
\begin{eqnarray*}
a_{0}^{m}(K) &=& \frac{\omega_{m-1}}{\omega_m} \int_{-1}^{1} K(t) (1-t^2)^{(m-2)/2} dt \\
   &=& \frac{1}{\omega_m} \int_{S^m} K(\langle x, y \rangle) d\omega_m(y) 
\end{eqnarray*} 
The last equality equals to 1 if $K$ is a probability density function.\\\\
For condition 2, we note that for any fixed $x \in S^m$,
\begin{eqnarray*}
\int_{-1}^{\rho} |K_n(t)| (1-t^2)^{(m-2)/2} dt &=& \frac{ \int_{-1}^{\rho} e^{nt} (1-t^2)^{(m-2)/2} dt}{\int_{-1}^{1} e^{nt} (1-t^2)^{(m-2)/2} dt} \\
 &=& \frac{ \int_{ \{y: \langle x, y \rangle \le \rho \}  } e^{n \langle x, y \rangle} d \omega_m(y) }{ \int_{ S^m  } e^{n \langle x, y \rangle} d \omega_m(y) }
\end{eqnarray*}
where the second equality is a result of applying a change of variable. Since $e^{n \langle x, y \rangle} \le e^{n \rho}$ if $\langle x, y \rangle < \rho$, the numerator above is bounded above by
\begin{eqnarray}
\label{ub_num}
\int_{ \{y: \langle x, y \rangle \le \rho \} } e^{n \langle x, y \rangle} d \omega_m(y) \le \omega_m(\{y: \langle x, y \rangle \le \rho\}) e^{n \rho} . 
\end{eqnarray}
To lower bound the denominator, we define the ball $B_{\delta}(x) := \{y \in S^m: \langle x, y \rangle \ge 1 - \delta\}$ where $1 - \delta > \rho$. Consequently,
\begin{align}
\label{lb_deno}
 \int_{ S^m  } e^{n \langle x, y \rangle} d \omega_m(y) \ge \int_{B_{\delta}(x)} e^{n \langle x, y \rangle} d \omega_m(y)   \\
       \ge e^{n(1-\delta)} \omega_m(B_{\delta}(x)) .
\end{align}
Therefore, combining the two inequalities (\ref{ub_num}) and (\ref{lb_deno}), we have
$$\int_{-1}^{\rho} |K_n(t)| (1-t^2)^{(m-2)/2} dt \le \frac{\omega_m(\{y: \langle x, y \rangle \le \rho\}) e^{n \rho}}{e^{n(1-\delta)} \omega_m(B_{\delta}(x))}.$$
Since $1 - \delta > \rho$, the RHS of the inequality above goes to 0 as $n \rightarrow \infty$.
\end{proof}

The following lemma concerning uniform approximation on $S^m$ by Riemann sums is useful.

\begin{lemma} 
\label{riemann_uniform}

Let $g(x, y):S^m\times S^m\to \mathbb R$ be a continuous function. Then for any $\delta>0$, there is a partition $\{U_1, \cdots, U_N\}$ of $S^m$ such that the integral $\int_{S^m} g(x, y)d\omega_m(y)$ can be uniformly approximated on $S^m$ by Riemann sums:
\begin{align*}
\max_{x\in S^m}\left|\int_{S^m}g(x, y)d\omega_m(y)-\sum_{k=1}^N g(x, y_k)\omega_m (U_k) \right|<\delta,
\end{align*}
for any $y_k \in U_k$, where each $U_k$ is connected.
\end{lemma}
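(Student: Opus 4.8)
The plan is to use the uniform continuity of $g$ on the compact set $S^m \times S^m$ to control the error of a Riemann sum whose mesh is small enough. First I would invoke compactness of $S^m\times S^m$: since $g$ is continuous there, it is uniformly continuous, so given $\delta>0$ there is $\eta>0$ such that $|g(x,y)-g(x,y')|<\delta/\omega_m$ whenever the geodesic (or Euclidean) distance between $y$ and $y'$ is at most $\eta$, \emph{uniformly in} $x$. This uniformity in $x$ is exactly what will make the resulting bound uniform over $x\in S^m$.

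Next I would construct the partition. Cover $S^m$ by finitely many open sets of diameter less than $\eta$ (possible by compactness), and then refine this cover into a finite partition $\{U_1,\dots,U_N\}$ into measurable pieces, each of diameter less than $\eta$ and each connected. Connectedness can be arranged, for instance, by taking a fine geodesic grid (intersections of coordinate slabs) or by intersecting small geodesic balls and taking connected components; each piece then has a well-defined finite surface measure $\omega_m(U_k)$ with $\sum_k \omega_m(U_k)=\omega_m$. Pick any representatives $y_k\in U_k$.

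Then I would estimate the error directly. Writing $\int_{S^m} g(x,y)\,d\omega_m(y) = \sum_{k=1}^N \int_{U_k} g(x,y)\,d\omega_m(y)$ and $\sum_{k=1}^N g(x,y_k)\omega_m(U_k) = \sum_{k=1}^N \int_{U_k} g(x,y_k)\,d\omega_m(y)$, the difference is bounded by
\begin{align*}
\left| \int_{S^m} g(x,y)\,d\omega_m(y) - \sum_{k=1}^N g(x,y_k)\omega_m(U_k)\right|
&\le \sum_{k=1}^N \int_{U_k} |g(x,y)-g(x,y_k)|\,d\omega_m(y) \\
&< \sum_{k=1}^N \frac{\delta}{\omega_m}\,\omega_m(U_k) = \delta,
\end{align*}
where the strict inequality uses that every $y\in U_k$ is within distance $\eta$ of $y_k$, and the bound is independent of $x$. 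Taking the supremum over $x\in S^m$ gives the claim.

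The main obstacle is not analytic but combinatorial-geometric: producing a genuine finite \emph{partition} of $S^m$ into \emph{connected} pieces of arbitrarily small diameter with well-defined surface measure. I would handle this by an explicit construction — e.g. pulling back a fine dyadic grid on coordinate charts, or using a geodesic triangulation of $S^m$ and subdividing — and noting that only finitely many pieces are needed and that the boundaries (which are measure zero) can be assigned to pieces arbitrarily without affecting the measures. Everything else is a routine consequence of uniform continuity.
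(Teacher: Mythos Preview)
Your argument is correct, and it is in fact cleaner than the paper's. The paper does not invoke uniform continuity of $g$ on the product $S^m\times S^m$ directly; instead it fixes a point $x'\in S^m$, uses continuity in the first variable to find a neighborhood $\mathcal U_{x'}$ on which $g(x,\cdot)$ is uniformly close to $g(x',\cdot)$, builds a Riemann-sum partition tailored to $x'$, and then runs a three-term triangle inequality to pass the estimate from $x'$ to all $x\in\mathcal U_{x'}$. Finally it extracts a finite subcover $\{\mathcal U_{x_1},\dots,\mathcal U_{x_n}\}$ of $S^m$ and takes a common refinement of the associated partitions. Your approach bypasses the covering-and-refinement step entirely: uniform continuity on the compact product immediately gives a modulus in $y$ that is uniform in $x$, so a single fine partition works for every $x$ at once, and the error estimate is a one-line computation. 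The paper's route would be natural in a setting where only pointwise continuity in $x$ is assumed (or where the first factor is not compact), but here both factors are compact, so your global argument is both shorter and conceptually more transparent. The one place you should be slightly more careful is the strict inequality after integrating the pointwise bound $|g(x,y)-g(x,y_k)|<\delta/\omega_m$; to avoid any quibble, simply choose the uniform-continuity threshold to be $\delta/(2\omega_m)$ so that the final sum is $\le \delta/2<\delta$.
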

\begin{proof}
For each $x'\in S^m$, there exists a neighborhood $\mathcal U_{x'}$ such that for $x\in \mathcal U_{x'}$, we have
\begin{align*}
\max_{y\in S^m}|g(x, y)-g(x', y)|<\frac{\delta}{3\omega_m}.
\end{align*}
Thus, for any $x \in \mathcal{U}_{x^{\prime}} $, we have
\begin{align*}
\left|\int_{S^m} g(x, y) d\omega_m(y)-\int_{S^m} g\left(x^{\prime}, y\right) d\omega_m(y)\right|
\le & \int_{S^m}\left|g(x, y)-g\left(x^{\prime}, y\right)\right| d\omega_m(y) \\
\le & \max _{y \in S^m}\left|g(x, y)-g\left(x^{\prime}, y\right)\right| \int_{S^m} d\omega_m(y)\\
<&\frac{\delta}{3}.
\end{align*}
There exists a partition $\{U_1, \cdots, U_{N'}\}$ of $S^m$ by standard spherical coordinates blocks such that $\int_{S^m}g(x', y)d\omega_m(y)$ can be approximated uniformly by Riemann sums:
\begin{align*}
\left|\int_{S^m} g(x', y)d\omega_m(y)-\sum_{k=1}^{N'}g(x', y_k)\omega_m(U_k)\right|<\frac{\delta}{3}
\end{align*}
for any $y_k\in U_k$.
Now, for $x \in \mathcal{U}_{x'}$, we have
\begin{align*}
\left|\int_{S^m} g(x, y) d\omega_m(y)-\sum_{k=1}^{N^{\prime}} g(x, y_{k})\omega_m(U_k)\right|
\le &\left|\int_{S^m} g(x, y) d\omega_m(y)-\int_{S^m} g(x^{\prime}, y) d\omega_m(y)\right| \\
&+\left|\int_{S^m} g(x^{\prime}, y) d\omega_m(y)-\sum_{k=1}^{N^{\prime}} g(x^{\prime}, y_{k})\omega_m(U_k)\right| \\
&+\left|\sum_{k=1}^{N^{\prime}} g(x^{\prime}, y_{k})\omega_m(U_k)-\sum_{k=1}^{N^{\prime}} g(x, y_{k})\omega_m(U_k)\right|\\
<&\delta.
\end{align*}

Since $\{\mathcal U_{x'}\}_{x'\in S^m}$ covers $S^m$, there exists a finite subcover $\{\mathcal U_{x_1}, \cdots, \mathcal U_{x_n}\}$. We can then find a common refinement of all the partitions used in the Riemann sums for $\int_{S_m}g(x_i, y)d\omega_m(y)$, $i=1, \cdots, n$. The claimed result follows immediately.
\end{proof}

The following result shows that any continuous function on $S^m$ can be uniformly approximated by linear combinations of $\{K_n(\langle \cdot, y_k\rangle )\}_{k}$ for $y_1, y_2, \ldots$ in $S^m$.

\begin{lemma} 
Let $f$ be a non-zero continuous function on $S^{m}$, then given $\delta>0, $ there exists integers $n$ and $N, y_{1}, y_{2}, \ldots y_{N}$ in $S^{m}, c_{1}, \ldots, c_{N}$ in $\mathbb{R}$ such that
\begin{align*}
\max _{x \in S^m}\left|f(x)-\sum_{k=1}^{N} c_{k} K_{n}\left(\left\langle x, y_{k}\right\rangle\right)\right|<\delta.
\end{align*}
\end{lemma}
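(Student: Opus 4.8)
The plan is to combine Lemma~\ref{conv2} (convergence of the spherical convolution $K_n * f \to f$ in the uniform norm) with Lemma~\ref{riemann_uniform} (uniform approximation of the convolution integral by Riemann sums). Fix $\delta > 0$. By Lemma~\ref{conv2} applied with $V_m$ the space of continuous functions under $\|\cdot\|_{m,\infty}$, we may choose $n$ large enough that $\|K_n * f - f\|_{m,\infty} < \delta/2$. Writing out the convolution, $(K_n * f)(x) = \frac{1}{\omega_m}\int_{S^m} K_n(\langle x, y\rangle) f(y)\, d\omega_m(y)$, so it remains to approximate this integral uniformly in $x$.

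Next I would apply Lemma~\ref{riemann_uniform} to the function $g(x,y) := \frac{1}{\omega_m} K_n(\langle x, y\rangle) f(y)$, which is continuous on $S^m \times S^m$ since $K_n$ is continuous on $[-1,1]$, the inner product is continuous, and $f$ is continuous. This yields a partition $\{U_1, \dots, U_N\}$ of $S^m$ and points $y_k \in U_k$ such that
\begin{align*}
\max_{x \in S^m}\left| (K_n * f)(x) - \sum_{k=1}^{N} \frac{1}{\omega_m} K_n(\langle x, y_k\rangle) f(y_k)\, \omega_m(U_k) \right| < \frac{\delta}{2}.
\end{align*}
Setting $c_k := \frac{1}{\omega_m} f(y_k)\, \omega_m(U_k)$, the triangle inequality combining the two estimates gives $\max_{x \in S^m}\bigl| f(x) - \sum_{k=1}^{N} c_k K_n(\langle x, y_k\rangle)\bigr| < \delta$, which is the claim. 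Note the $c_k$ are genuine real numbers but not necessarily non-negative or normalized at this stage — that is exactly why this lemma is stated for general continuous $f$ with $c_k \in \mathbb{R}$, and the passage to a bona fide mixture (with $c_k > 0$, $\sum c_k = 1$) is deferred to the proof of Theorem~\ref{dense2}, where positivity of the density $f$ will be used.

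I do not expect a serious obstacle here: both ingredients are already in hand, and the argument is essentially a two-step triangle inequality. The one point requiring a little care is verifying that $g(x,y)$ meets the hypothesis of Lemma~\ref{riemann_uniform} — joint continuity on the compact product $S^m \times S^m$ — but this is immediate from continuity of $t \mapsto K_n(t) = c_{m+1}(n) e^{nt}$ on $[-1,1]$ and of $(x,y)\mapsto \langle x,y\rangle$. A second minor point is that $n$ must be fixed \emph{before} invoking Lemma~\ref{riemann_uniform}, since the partition and the number $N$ of blocks will depend on $n$ (the kernel $K_n$ becomes sharply peaked as $n$ grows, so finer partitions are needed); the order of quantifiers — first choose $n$, then choose the partition — handles this cleanly.
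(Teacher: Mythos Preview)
Your proposal is correct and follows essentially the same two-step argument as the paper: first invoke Lemma~\ref{conv2} to choose $n$ with $\|K_n*f-f\|_{m,\infty}<\delta/2$, then invoke Lemma~\ref{riemann_uniform} to approximate the convolution integral by a Riemann sum, and conclude by the triangle inequality with $c_k$ proportional to $f(y_k)\,\omega_m(U_k)$. Your version is in fact slightly more careful than the paper's in tracking the $1/\omega_m$ normalization from the definition of spherical convolution.
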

\begin{proof}
By Lemma 2, there exists an integer $n$ such that
\begin{equation}\label{ineq1}
\max _{x \in S^m}\left|f(x)-\int_{S^{m}} K_{n}\left(\langle x, y\rangle \right) f(y) d \omega_{m}(y) \right|<\frac{\delta}{2}.
\end{equation}
On the other hand, by Lemma 3, there exists a partition $\{U_1, \cdots, U_N\}$ by connected sets of $S^m$ such that for any $x\in S^m$ and $y_k\in U_k$,
\begin{equation}
\label{riemann_approx}
\left|\int_{S^m}K(\langle x, y\rangle)f(y)d\omega_m(y)-\sum_{k=1}^N K_n(\langle x, y_k\rangle ) f(y_k)\omega_m(U_k)\right|<\frac{\delta}{2}.
\end{equation}
The result follows by combining \eqref{ineq1}, \eqref{riemann_approx}, and letting $c_{k}=f(y_k) \omega_m (U_{k}) $ for $k=1, \ldots, N$.

\end{proof}

\begin{proof} [Proof of Theorem 1]
It remains to carefully pick the points $y_{k} \in U_{k}$ to ensure that $\sum_{k=1}^{N} c_{k}=1$ in (\ref{riemann_approx}). This follows by applying the integral mean value theorem to each of the integrals
\begin{equation}
\int_{U_{k}} f(y) d \omega_m(y), \quad k=1, \ldots, N
\end{equation}
with connected $U_{k},$ and the fact that
\begin{equation*}
\sum_{k=1}^{N} \int_{U_k } f(y) d\omega_m(y)=1.
\end{equation*}

\end{proof}

\bibliographystyle{asa}
\bibliography{main}

\end{document}